\newif\ifrevisionmode
\numberwithin{equation}{section}
\theoremstyle{plain}
\newtheorem{theorem}{Theorem}[section]
\theoremstyle{definition}
\newtheorem{conjecture}[theorem]{Conjecture}%[section]
\theoremstyle{remark}
\newtheorem{remark}[theorem]{Remark}
\newcommand{\GL}{\operatorname{GL}}
\newcommand{\Sp}{\operatorname{Sp}}
\DeclareMathOperator{\Reg}{Reg}
   \DeclareFontFamily{U}{wncy}{}
    \DeclareFontShape{U}{wncy}{m}{n}{<->wncyr10}{}
    \DeclareSymbolFont{mcy}{U}{wncy}{m}{n}
    \DeclareMathSymbol{\Sh}{\mathord}{mcy}{"58}
\def\@tocline#1#2#3#4#5#6#7{\relax
  \ifnum #1>\c@tocdepth % then omit
  \else
    \par \addpenalty\@secpenalty\addvspace{#2}%
    \begingroup \hyphenpenalty\@M
    \@ifempty{#4}{%
      \@tempdima\csname r@tocindent\number#1\endcsname\relax
    }{%
      \@tempdima#4\relax
    }%
    \parindent\z@ \leftskip#3\relax \advance\leftskip\@tempdima\relax
    \rightskip\@pnumwidth plus4em \parfillskip-\@pnumwidth
    #5\leavevmode\hskip-\@tempdima
      \ifcase #1
       \or\or \hskip 1em \or \hskip 2em \else \hskip 3em \fi%
      #6\nobreak\relax
    \hfill\hbox to\@pnumwidth{\@tocpagenum{#7}}\par% <---- \dotfill -> \hfill
    \nobreak
    \endgroup
  \fi}
\begin{document}

\title[Orthogonal families]
{Selberg orthogonality for half-integral weight modular forms}
\author{Shenghao Hua~\orcidlink{0000-0002-7210-2650}}
\address[1]{Shanghai Institute for Mathematics and Interdisciplinary Sciences (SIMIS), Shanghai, 200433, China}
\address[2]{Research Institute of Intelligent Complex Systems, Fudan University, Shanghai, 200433, China}
\email{huashenghao@vip.qq.com}

%\date{\today}

\begin{abstract}

The Keating--Snaith conjecture for orthogonal families may be viewed as analogous to a Gaussian distribution with a negative mean, and the possibility that mixed moments resemble a composition of independent moments, these two insights were combined and applied in Lester and Radziwi{\l\l}'s proof of quantum unique ergodicity for half-integral weight automorphic forms, via Soundararajan's method under the Generalized Riemann Hypothesis (GRH). This observation also yields a crucial and nontrivial saving in the resolution of certain arithmetic problems.

Inspired by this, we select a series of typical mixed orthogonal families of $L$-functions: $\mathrm{GL}_2$ quadratic twisted families, Gao and Zhao established a sharp upper bound by building upon Harper's method, and one can replace square-free numbers with primes in this argument.
Under the assumptions of the GRH and the Generalized Ramanujan Conjecture, we present the following three arithmetic applications:

i) The decorrelation of Fourier coefficients of half-integral weight modular forms, specifically, a variant of Selberg orthogonality for distinct half-integral weight modular forms.

ii) The decorrelation of automorphic periods averaged over prime imaginary quadratic fields.

iii) The decorrelation of the analytic orders of isotropy subgroups of Tate--Shafarevich groups of elliptic curves under prime quadratic twists.
\end{abstract}

\keywords{$L$-functions, decorrelation, mixed moments, quadratic twists, half-integral weight
modular forms, elliptic curves, automorphic periods}

\subjclass[2020]{11F12, 11F30, 11F66}

\maketitle
%\setcounter{tocdepth}{1}%使目录只显示到节
%\tableofcontents

%%%%%%%%%%%%%%%%%%%%%%%%%%%%%%%%%%%%%%%%%%%%%%%%%%%%%%%%%%%%%%%%
%%%%%                        Section                       %%%%%
%%%%%%%%%%%%%%%%%%%%%%%%%%%%%%%%%%%%%%%%%%%%%%%%%%%%%%%%%%%%%%%%
%\section{Introduction} \label{sec:Intr}

\section{Fourier coefficients of half-integral weight modular forms}

Let $k \geq 2$ be an integer.
In the theory of modular forms,
cusp forms of half-integral weight, particularly those of weight
$k + \tfrac{1}{2}$ on the congruence
subgroup $\Gamma_0(4N)$, play a central role
in connecting automorphic forms to
deep arithmetic objects.
Among these, the Kohnen plus space
$S^+_{k+1/2}(\Gamma_0(4N))$ is of special interest.
It consists of modular forms $g$ whose
$n$-th Fourier coefficient $c(n)$ vanishes
whenever $(-1)^k n \equiv 2, 3 \pmod{4}$.
This subspace is not only technically
convenient, but also arithmetically
meaningful: in $S^+_{k+1/2}(\Gamma_0(4N))$,
Shimura's correspondence between
half-integral weight forms and integral
weight modular forms is especially
well-behaved.
Kohnen~\cite{Kohnen1980} showed that
$S^+_{k+1/2}(\Gamma_0(4N))$ admits a Hecke algebra
structure and is isomorphic to the space
of level $N$ cusp forms of weight $2k$.
As a result, each Hecke eigenform
$g \in S^+_{k+1/2}(\Gamma_0(4N))$ corresponds to a Hecke eigenform $f$ of weight $2k$ and level $N$, known as the Shimura lift of $g$.
The Fourier coefficients $c(n)$
of such $g$ are known to carry deep
arithmetic information.
A remarkable theorem of Waldspurger,
in an explicit form due to
Kohnen and Zagier~\cite{KohnenZagier1981},
connects these Fourier coefficients
to central values of twisted $L$-functions.
Specifically, for a fundamental discriminant
$d$ with $(-1)^k d > 0$, one has
\begin{equation}\label{eqn:fourier}
|c(|d|)|^2
=
 \frac{(k - 1)!}{\pi^k}
\frac{\langle g, g \rangle}
{\langle f, f \rangle}
L\left( \tfrac{1}{2}, f
\otimes \chi_d \right),
\end{equation}
where $\chi_d$ is the quadratic Dirichlet
character associated to
$\mathbb{Q}(\sqrt{d})$,
and $\langle \cdot, \cdot \rangle$
denotes the Petersson inner product.
This formula shows that the size of
the Fourier coefficient $c(|d|)$ is
governed by the central value of the
twisted $L$-function of the Shimura lift $f$.

We consider products of Fourier coefficients of several half-integral weight modular forms and study the sum of their absolute values over a range of primes. In particular, the summation of the products of Fourier coefficients at prime indices for two distinct forms corresponds to a version of the \emph{Selberg orthogonality problem}. Typically, such orthogonality results are formulated for automorphic representations of $\mathrm{GL}(n)$; for instance, the work of Liu--Wang--Ye~\cite{LWY2005} and the proof of the Rudnick--Sarnak Hypothesis~H~\cite{RS1996} (recently established by Jiang~\cite{Jiang2025}) apply to pairs of self-contragredient $\mathrm{GL}(n)$ representations. These results were later extended by Avdispahi\'c and Smajlovi\'c~\cite{AS2010} to cases where one of the representations is not necessarily self-contragredient. However, half-integral weight modular forms correspond to representations of the metaplectic double cover of $\mathrm{GL}(2)$, a setting not covered by the aforementioned general theory.

\begin{theorem}[Decorrelation of Fourier coefficients of half-integral weight modular forms]\label{thm:sum}
Let $1 \le i \le m$, and let each $g_i$ be a Hecke eigenform in the Kohnen space $S^+_{k_i + 1/2}(\Gamma_0(4N_i))$ with Fourier coefficients $c_i(n)$.
Assume that the forms $g_1, \dots, g_m$ are pairwise distinct.
Let $N_0 = [8, N_1, \dots, N_m]$.
Let $a \bmod{N_0}$ denote a residue class with $a \equiv 1 \pmod{4}$ and $(a, N_0) = 1$.
Assume the GRH holds for
\begin{equation*}
L(s, \mathrm{sym}^2 f_i), \quad
L(s, f_i \times \chi_{\sigma p}), \quad
L(s, \chi_{\sigma p}),
\end{equation*}
with $f_i$ associated with the Shimura lifts of $g_i$, and primes $p \equiv \sigma a \pmod{N_0}$ with $X \le p \le 2X$.
Then we have
\begin{equation*}
\frac{1}{X}
\sum_{\substack{\mathrm{prime }~p \equiv \sigma a \pmod{N_0}\\X \le p\le 2X}}\log p
\prod_{i=1}^{m}
|c_i(p)|
\ll_{g_1,\dots,g_m}
(\log X)^{-\frac{m}{8}}.
\end{equation*}

Specifically, setting $m=2$ yields a variant of Selberg orthogonality for distinct half-integral weight modular forms as
\begin{equation*}
\sum_{\substack{X \le p\le 2X \\ \mathrm{prime }~
p \equiv \sigma a \pmod{N_0}}}
c_1(p)c_2(p)=o(\frac{X}{\log X}).
\end{equation*}
\end{theorem}

\begin{proof}
Combining Theorem~\ref{thm:Lbound} with \eqref{eqn:fourier}.
\end{proof}

\begin{remark}
For Selberg orthogonality, the case of identical forms falls within the standard theory concerning central values of prime quadratic twisted $L$-functions and remains an open problem.
\end{remark}

\begin{remark}
Beyond their magnitudes,
many other properties of the Fourier
coefficients of half-integral weight
modular forms have drawn considerable
attention, most notably the question of
their sign changes; see, for example,
\cite{BruinierKohnen2008,Darreye2020,HeKane2021,HulseKiralKuanLim2012,JLLRW2016,KnoppKohnenPribitkin2003,KohnenLauWu2013,LauRoyerWu2016,LesterRadziwill2021,Xu2023}.
\end{remark}

\section{Orthogonal families}

Selberg~\cite{Selberg1946} demonstrated that on the critical line, the real and imaginary parts of the logarithm of the Riemann zeta function are distributed like real Gaussian variables with mean zero and variance $\frac{1}{2} \log \log T$, as the imaginary part ranges from $T$ to $2T$.
Montgomery~\cite{Montgomery1973} provided a prediction regarding the pair correlation of zeros of the Riemann zeta function on the critical line, describing the expected distribution of their spacings, then Dyson observed that this predicted distribution matches the pair correlation statistics of eigenvalues from the Gaussian Unitary Ensemble in random matrix theory.

Katz and Sarnak~\cite{KatzSarnak1999} extended this framework to families of $L$-functions over function fields, demonstrating that the distribution of their low-lying zeros corresponds to orthogonal, unitary, or symplectic symmetry types, depending on the family. Building on these ideas, Keating and Snaith~\cite{KeatingSnaith2000a,KeatingSnaith2000b} used the Circular Ensemble from random matrix theory to conjecture that the logarithm of the central values of $L$-functions in various families follows different Gaussian distributions, each with explicit leading constants.
This significantly generalized prior results by Conrey--Ghosh~\cite{ConreyGhosh1992,ConreyGhosh1998} and Balasubramanian--Conrey--Heath-Brown~\cite{BCH1985}.
Their framework was subsequently refined and extended by Conrey--Farmer~\cite{ConreyFarmer2000}, Diaconu--Goldfeld--Hoffstein~\cite{DGH2003}, Conrey--Farmer--Keating--Rubinstein--Snaith~\cite{CFKRS2005}.
A corrected and clarified version in the function field setting was later given by Sawin~\cite{Sawin2020}.

Current research on the central values of $L$-functions focuses primarily on three fronts: deriving asymptotic formulas for low-order moments, establishing lower bounds~\cite{RS2006}, and obtaining upper bounds. For the latter, Soundararajan~\cite{Soundararajan2009} developed a method based on the GRH that achieves the predicted order (up to a small power of $\log T$), while Harper's refinement~\cite{Harper2013} removes the extraneous $\varepsilon$-power entirely.

In particular, for orthogonal families of $L$-functions, the moments of central values in the range between order $0$ and $1$ (excluding the endpoints) contribute a negative power of $\log$ in their leading-order asymptotics.
This negative logarithmic contribution has concrete applications in arithmetic problems, as it leads to nontrivial savings in analytic estimates.

Traditionally, studies have focused on twisting a single automorphic form.
Building on Chandee's work~\cite{Chandee2011} on shifted moments of the Riemann zeta function, and later the work of Milinovich and Turnage-Butterbaugh~\cite{MTB2014} on integral moments of product of $L$-functions,
it has been observed that when multiple automorphic forms share the same symmetry type under a common twist, their mixed moments, i.e., products of their $L$-values, exhibit statistical independence under suitable conditions.

The combination of the decorrelation phenomenon observed in mixed moments and the negative logarithmic power bounds predicted by the Keating--Snaith conjecture for low-order moments ($0<k<1$) has emerged as a powerful framework in modern analytic number theory.
This interplay plays a central role in several recent breakthroughs.
For instance, Lester and Radziwi\l\l~\cite{LesterRadziwill2020} proved quantum unique ergodicity for half-integral weight automorphic forms, Huang and Lester~\cite{HuangLester2023} investigated the quantum variance of dihedral Maass forms, while Blomer, Brumley, and Khayutin~\cite{BlomerBrumleyKhayutin2022} proved the joint equidistribution conjecture proposed by Michel and Venkatesh in their 2006 ICM proceedings article~\cite{MichelVenkatesh2006}.
Blomer and Brumley~\cite{BlomerBrumley2024} subsequently proved the joint equidistribution of orbits in arithmetic quotients.
J\"a\"asaari, Lester, and Saha~\cite{JLS2023} established sign changes for coefficients of Siegel cusp forms of degree 2, and showed that the mass of Saito--Kurokawa lifted holomorphic cuspidal Hecke eigenforms for $\Sp_4(\mathbb{Z})$ equidistributes on the Siegel modular variety as the weight tends to infinity~\cite{JLS2024}.
Hua, Huang, and Li~\cite{HuaHuangLi2024} established a case of their joint Gaussian moment conjecture (with the holomorphic version discussed in Huang~\cite{Huang2024}). More recently, Chatzakos, Cherubini, Lester, and Risager~\cite{CCLS2025} obtained a logarithmic improvement on Selberg's longstanding bound for the error term in the hyperbolic circle problem over Heegner points with varying discriminants. Hua~\cite{Hua2025quad} demonstrated that for $0 < p < 2$, the $\ell^p$-norm of some quadratic forms in holomorphic Hecke cusp forms tends to zero asymptotically with respect to expansion in a given orthonormal basis of Hecke eigenforms.

In this paper, we illustrate this phenomenon using quadratic twists of $\mathrm{GL}_2$ forms as an example. The family of quadratic twists of a single $\mathrm{GL}_2$ form constitutes an orthogonal family, and the mixed moments of the central $L$-values of several such forms admit an upper bound consistent with the additive behavior expected under statistical independence.

\section{Quadratic twisted families}

The study of central values of $L$-functions related to quadratic characters is an interesting topic (see e.g.~\cite{BP2022,DGH2003,DW2021,GZ2024a,GH1985,HS2022,HH2023a,
HH2023b,HH2023c,Jutila1981,RS2015,Shen2022,Sono2020,Soundararajan2000,
Soundararajan2008,Soundararajan2021,SY2010,Young2009,Young2013}).
Any real primitive character with the modulus $d$ must be of the form $\chi_d(\cdot)=(\frac{d}{\cdot})$, where $d$ is a fundamental discriminant~\cite[Theorem 9.13]{MV2007}, i.e., a product of pairwise coprime factors of the form $-4$, $\pm 8$ and $(-1)^{\frac{p-1}{2}}p$, where $p$ is an odd prime.
Knowledge about moments of central $L$-values is often the basis for the study of the distribution of central $L$-values.
Jutila~\cite{Jutila1981} and Vinogradov--Takhtadzhyan~\cite{VT1981} independently established asymptotic formulas for the first moment of quadratic Dirichlet $L$-functions, whose conductors are all fundamental discriminants, or all prime numbers.
Afterwards, Goldfeld--Hoffstein~\cite{GH1985} and Young~\cite{Young2009} improved the error term.
There also are many developments in setting up asymptotic formulas of higher moments.
Jutila~\cite{Jutila1981} also obtained an asymptotic formula for the second moment, and Sono~\cite{Sono2020} improved the error term of it.
Soundararajan~\cite{Soundararajan2000} got an asymptotic formula for the third moment, and he proved at least $87.5\%$ of the odd square-free integers $d\geq 0$, $L(\frac{1}{2},\chi_{8d})\neq 0$, whereas $L(\frac{1}{2},\chi)\neq 0$ holds for all primitive quadratic characters is a well-known conjecture of Chowla~\cite{Chowla1965}.
Diaconu--Goldfeld--Hoffstein~\cite{DGH2003}, Young~\cite{Young2013}, and Diaconu--Whitehead~\cite{DW2021} improved the error term for the third moment.

The study of the distribution of central values of automorphic $L$-functions in quadratic twists families is also active.
Radziwi{\l \l} and Soundararajan~\cite{RS2015} established an asymptotic formula for the first moment of central $L$-values
of quadratic twists of elliptic curves, and they used this result to derive a one-sided central limit theorem and unconditional upper bounds of small moments corresponding to Keating--Snaith conjecture.
Recently they established a two-sided central limit theorem under the GRH~\cite{RS2024}.
Shen~\cite{Shen2022} set up asymptotic formulas with small error term for the first moment of quadratic twists of modular $L$-functions and the first derivative of quadratic twists of modular $L$-functions.
Recently, Li~\cite{Li2022} got an asymptotic formula for the second moment of quadratic twists of modular $L$-functions, which was previously known conditionally on the Generalized Riemann Hypothesis by the work of Soundararajan and Young~\cite{SY2010}.
Combining Soundararajan's third moment method~\cite{Soundararajan2000}, Young's recursion method~\cite{Young2009} and Heath-Brown's large sieve for quadratic characters~\cite{HB1995}, Hua and Huang~\cite{HH2023a} established asymptotic formulas for the twisted first moment of twisted $\GL_3$ $L$-functions, and they employed these to determine the correspondence for the original Hecke--Maass cusp form or its dual form by central $L$-values.
Also recently, Gao and Zhao~\cite{GZ2024a} built an asymptotic formula for the twisted first moment of central values of the product of a quadratic Dirichlet $L$-function and a quadratic twist of a modular $L$-function.

Hoffstein and Lockhart~\cite{HL1999} extended Heath-Brown's idea to prove an extreme central values result for quadratic twists of modular $L$-functions.
Recently, Hua and Huang~\cite{HH2023c} got an extreme central $L$-values result for almost prime quadratic twists of an elliptic curve $E$, motivated by studying how large the extreme orders of the Tate--Shafarevich groups in the quadratic twist family of an elliptic curve are under the Birth--Swinnerton-Dyer conjecture. The restriction ``almost prime quadratic twists" is just used to control the size of the Tamagawa numbers.
Tamagawa number is bounded by the square of the divisor function trivially, which is much larger than both the conjectured order of magnititude of the extrema of the $L$-function and the order of magnititudethe of the extremum of the $L$-function that can be detected at present.

For the mixed moments of the central values of quadratic twisted $\GL_2$ $L$-functions associated to multiple forms, we have the following conditional result.

\begin{theorem}[Decorrelation of prime quadratic twisted $L$-functions]\label{thm:Lbound}
Let $1 \le i \le m$, and suppose each $f_i$ is either an even-weight Hecke eigenform or a Hecke--Maass form satisfying the Generalized Ramanujan Conjecture (GRC), with level $N_i$.
Assume the forms $f_1, \dots, f_m$ are pairwise distinct.
Fix $\sigma=\pm 1$.
Let $N_0 = [8, N_1, \dots, N_m]$.
Let $a \bmod{N_0}$ denote a residue class with $a \equiv 1 \pmod{4}$ and $(a, N_0) = 1$.
Assume the GRH holds for
\begin{equation*}
L(s, \mathrm{sym}^2 f_i), \quad
L(s, f_i \times \chi_{\sigma p}), \quad
L(s, \chi_{\sigma p})
\end{equation*}
for all $f_i$ and primes $p \equiv \sigma a \pmod{N_0}$ with $X \le p \le 2X$.
Then for any $\ell_1, \dots, \ell_m > 0$, we have
\begin{equation*}
\frac{1}{X}
\sum_{\substack{X \le p\le 2X \\ \mathrm{prime }~
p \equiv \sigma a \pmod{N_0}}}\log p
\prod_{i=1}^{m}
L\left(\tfrac{1}{2}, f_i \times \chi_{\sigma p}\right)^{\ell_i}
\ll_{f_1, \dots, f_m, \ell_1, \dots, \ell_m}
(\log X)^{\sum_{i=1}^{m} \frac{\ell_i(\ell_i - 1)}{2} }
\end{equation*}
as $X \to \infty$.
\end{theorem}

\begin{remark}
  For each $1 \le i \le m$, let $f_i$ have root number $\epsilon_{f_i}$.
Let $a$ be an integer such that, for any prime $p$ with $p\equiv \sigma a \pmod{N_0}$, the root numbers satisfy
\begin{equation*}
\epsilon_{f_i}(\sigma p) = \epsilon_{f_i} \chi_{\sigma p}(-N_i) = 1, \quad \text{for all } i = 1, \dots, m.
\end{equation*}
If this condition fails, then at least one of the $m$ associated $L$-functions vanishes identically.
\end{remark}

\begin{proof}
First adapt Lemma 2.8 of Gao--Zhao~\cite{GZ2023} to a congruence version, use it in place of Lemma 2.8 in Gao--Zhao~\cite{GZ2024b}, and then proceed as in the proofs of Theorem 1.2 and Corollary 1.4 of the same work~\cite{GZ2024b}.
\end{proof}

%\begin{remark}
%  The $\varepsilon$-power can be removed using Harper's method~\cite{Harper2013}, see also~\cite[Theorem 2]{Hagen2024}.
%\end{remark}
%
%We list some preliminaries in \S\ref{sec:lemma}, and prove Theorem~\ref{thm:Lbound} in \S\ref{sec:proof}.

\section{Automorphic periods}

Let \(q\) be a non-degenerate integral ternary quadratic form, and let \( \mathrm{SO}_q\) be its special orthogonal group.
There exists a quaternion algebra \(B/\mathbb{Q}\) with reduced norm \(\mathrm{Nr}_B\) and a scalar \(\lambda \in \mathbb{Q}^\times\) such that
\[
(\mathbb{Q}^3, q) \simeq (B_0, \lambda \mathrm{Nr}_B),
\]
where \(B_0\) is the subspace of trace-zero quaternions. This identification allows us to replace \(\mathbb{Z}^3\) by a lattice \(L \subset B_0\) with quadratic form \(q = \lambda \mathrm{Nr}_B\).

Under this isometry, \(\mathrm{SO}_q \simeq G := \mathrm{PB}^\times = B^\times/\mathbb{Q}^\times\), acting on \(B_0\) by conjugation. The genus classes correspond to
\[
G(\mathbb{Q}) \backslash G(\mathbb{A}_f) / K_f,
\]
where \(K_f\) stabilizes \(L \otimes \widehat{\mathbb{Z}}\).

For each positive integer \(d\),
Representations \((a,b,c)\) of \(d\) by \(q\) correspond to maximal tori
\[
T \simeq \mathrm{Res}_{K/\mathbb{Q}} \mathbb{G}_m / \mathbb{G}_m,
\]
where \(K = \mathbb{Q}(\sqrt{-d/\lambda})\). These tori give rise to adelic torus orbits
\[
[Tg] := T(\mathbb{Q}) \backslash T(\mathbb{A}) \cdot g \subset [G] := G(\mathbb{Q}) \backslash G(\mathbb{A}),
\]
which encode arithmetic information on representations of \(d\) by \(q\).

The equidistribution of these orbits comes from Duke's theorem~\cite{Duke1988} and reduces to studying Weyl sums (automorphic periods)
\[
P_d(\varphi) := \int_{[T]} \varphi(tg) \, dt,
\]
for automorphic forms \(\varphi\) on \([G]\).

Waldspurger's formula relates the square of these toric periods twisted by characters \(\chi\) to central values of automorphic \(L\)-functions:
\begin{equation}\label{eqn:Waldspurgerauto}
\frac{\left|\int_{[T]} \varphi(tg) \chi(t) dt \right|^2 }{\prod_v \langle \varphi_v, \varphi_v \rangle} = c|d|^{-\frac{1}{2}} L\bigl(\frac{1}{2},\pi^{\mathrm{JL}}_K \otimes \chi, \frac{1}{2}\bigr)
L\bigl(1,\pi^{\mathrm{JL}}_K \otimes \chi\bigr)^{-1}
\prod_v \mathcal{P}_v(\varphi_v, \chi_v),
\end{equation}
where \(\pi^{\mathrm{JL}}\) is the Jacquet--Langlands transfer of \(\pi\) to \(\mathrm{PGL}_2\), \(\pi^{\mathrm{JL}}_K\) its base change to \(K\), and \(\mathcal{P}_v\) are local period integrals.

Without loss of generality, let $\lambda=1$.
For quadratic characters \(\chi_d\) associated to \(K = \mathbb{Q}(\sqrt{-d})\), these periods encode the central values of quadratic twist \(L\)-functions
\[
L\bigl(s, \pi^{\mathrm{JL}} \times \chi_{-d}\bigr).
\]

Thus, the geometric automorphic periods over special torus orbits parametrized by discriminants \(d\) are intimately linked to the analytic behavior of quadratic twist \(L\)-functions for \(\mathrm{GL}_2\) automorphic forms.
Then we have the following decorrelation for the automorphic periods at special points.

\begin{theorem}[Decorrelation of automorphic periods]
Let $1 \le i \le m$, and suppose each $\varphi_i$ is either an even-weight Hecke eigenform or a Hecke--Maass form satisfying the GRC, with level $N_i$.
Let $N_0 = [8, N_1, \dots, N_m]$.
Let $a \bmod{N_0}$ denote a residue class with $a \equiv 1 \pmod{4}$ and $(a, N_0) = 1$.
Assume the GRH holds for
\begin{equation*}
L(s, \mathrm{sym}^2 \varphi_i), \quad
L(s, \varphi_i \times \chi_{-p}), \quad
L(s, \chi_{-p})
\end{equation*}
for all $f_i$ and for all primes $p \equiv -a \pmod{N_0}$ with $X \le p \le 2X$.
Then we have
\[
X^{\frac{m}{4}-1} \sum_{\substack{K=\mathbb{Q}(\sqrt{-p}) \\\mathrm{prime }~ p \equiv -a \pmod{N_0} \\ X \leq p \leq 2X}} \log p
\prod_{i=1}^m |P_{-p}(\varphi_i)| \ll_{\varphi_1,\dots,\varphi_m} (\log X)^{-\frac{m}{8}}.
\]
\end{theorem}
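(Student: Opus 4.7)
The plan is to convert the period sum into a half-power moment of central $L$-values via Waldspurger's formula and then invoke Theorem~\ref{thm:Lbound} at the exponents $\ell_1=\cdots=\ell_m=1/2$.

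First I would apply Waldspurger's formula, as recalled in the introduction. With the trivial character on the torus, it yields
\[
|P_d(\varphi_i)|^2 \ll d^{-1/2}\, L\bigl(\tfrac{1}{2},\varphi_i\times\chi_{-d}\bigr)\cdot C_i(d),
\]
where $C_i(d)$ collects the remaining factors: the constants $L(1/2,\varphi_i)$ and $L(1,\mathrm{Ad}\,\varphi_i)^{-1}$ depending only on $\varphi_i$, the inverse $L(1,\chi_{-d})^{-1}$ from the base-change normalization, and the bounded product of local periods $\mathcal{P}_v$ at ramified places. Under the GRH hypotheses of the statement, Littlewood's bound gives $L(1,\chi_{-d})^{-1}\ll (\log D)^\varepsilon$; together with the observation that the local factors are $O(1)$ uniformly once $d$ is confined to a fixed residue class $a\pmod{N_0}$, this yields $C_i(d)\ll (\log D)^\varepsilon$. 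Taking square roots gives
\[
|P_d(\varphi_i)|\ll d^{-1/4}\, L\bigl(\tfrac{1}{2},\varphi_i\times\chi_{-d}\bigr)^{1/2}\, (\log D)^\varepsilon.
\]

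Next I would take the product over $i=1,\ldots,m$, sum over squarefree $d\in[D,2D]$ with $d\equiv a\pmod{N_0}$, and apply Theorem~\ref{thm:Lbound} (with $\sigma=-1$) at $\ell_i=1/2$. Since $\ell_i(\ell_i-1)/2=-1/8$, the total logarithmic exponent is $-m/8+\varepsilon$, so that
\[
\sum_{d}\prod_{i=1}^m L\bigl(\tfrac{1}{2},\varphi_i\times\chi_{-d}\bigr)^{1/2} \ll D\,(\log D)^{-m/8+\varepsilon}.
\]
Combining with the factor $d^{-m/4}$ picked up from the square roots of Waldspurger's identity and then multiplying through by $D^{m/4-1}$ produces the claimed bound.

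The main obstacle lies in securing uniformity of Waldspurger's formula across the arithmetic progression of discriminants: one must verify that the local period factors $\mathcal{P}_v(\varphi_{i,v},\chi_{-d,v})$ at the ramified primes, as well as the adelic normalization constants, depend on $d$ only through the residue class $a\bmod N_0$ and therefore contribute a bounded quantity rather than a growing power of $\log D$. Once this technical reduction is in place, the negative logarithmic saving $(\log D)^{-m/8}$ is a direct consequence of the orthogonal-symmetry prediction of Theorem~\ref{thm:Lbound} at the sub-unit exponent $1/2$, which is precisely the regime where the Keating--Snaith heuristic produces decorrelation across the family.
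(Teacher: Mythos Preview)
Your proposal is correct and follows the same route the paper intends: convert $|P_d(\varphi_i)|$ into $d^{-1/4}L(\tfrac12,\varphi_i\times\chi_{-d})^{1/2}$ (up to factors of size $(\log D)^\varepsilon$) via Waldspurger's formula, then invoke Theorem~\ref{thm:Lbound} with $\ell_1=\cdots=\ell_m=\tfrac12$ to produce the exponent $\sum_i\tfrac12(\tfrac12-1)/2=-m/8$. The paper does not spell out the details you supply for bounding the auxiliary $L$-values at $1$ and the local period factors, but your treatment of these (Littlewood under GRH, uniformity once $d$ is fixed in a residue class modulo $N_0$) is exactly what is needed.
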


\begin{proof}
Combining Theorem~\ref{thm:Lbound} with \eqref{eqn:Waldspurgerauto}.
\end{proof}

\section{Isotropy subgroups of Tate--Shafarevich groups of elliptic curves}

For an elliptic curve \( E/\mathbb{Q} \), we normalize its \( L \)-function so that its central value is at \( s = \frac{1}{2} \).
The Birch and Swinnerton-Dyer (BSD) conjecture relates the first non-zero term in the Taylor expansion of the \(L\)-function at the central point to various arithmetic quantities, including the order of the Shafarevich--Tate group, the Tamagawa numbers, and the regulator.

\begin{conjecture}[BSD conjecture]
For $E$ over $\mathbb{Q}$,
\begin{equation}\label{eqn:BSD}
  \lim_{s \to \frac{1}{2}} \frac{L(s,E)}{(s - \frac{1}{2})^r} =
\frac{ \#\Sh(E) \Reg(E) \Omega_{E} \prod_{p \text{ prime}} c_p(E) }
{ \left( \#E_{\text{tors}} \right)^2 },
\end{equation}
where:
\begin{itemize}
  \item \( r = r(E)=\text{ord}_{s=\frac{1}{2}} L(E, s) \) is the vanishing order of $L$-function at the central point, i.e., the analytic rank of \( E \),
  \item \( \Sh(E) \) is the Shafarevich--Tate group,
  \item $\Reg(E)$ is the regulator which is defined as the determinant of the N\'eron--Tate height pairing on the free part of $E(\mathbb{Q})$,
  \item \( \Omega_{E} \) is the real period,
  \item \( c_p(E) \) are the Tamagawa numbers at the finite places \( p \) of $\mathbb{Q}$,
  \item \( E_{\text{tors}} \) is the torsion subgroup of \( E(\mathbb{Q}) \).
\end{itemize}
\end{conjecture}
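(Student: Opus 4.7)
The final statement is the Birch and Swinnerton-Dyer conjecture in full strength, a Clay Millennium Problem that remains open in general. A genuine proof lies beyond current methods, so what I can offer is a roadmap reflecting how the established partial cases are obtained, together with an honest assessment of where the obstructions lie. The plan is to split the conjecture into two layers: the \emph{rank part}, identifying the analytic rank $r(E) = \mathrm{ord}_{s = 1/2} L(s,E)$ with the Mordell--Weil rank of $E(\mathbb{Q})$ and asserting the finiteness of $\Sh(E)$; and the \emph{leading-term part}, the exact formula for $\lim_{s \to 1/2} L(s,E)/(s - \tfrac{1}{2})^{r}$ in terms of $\Reg(E)$, $\Omega_{E}$, $\#\Sh(E)$, $\prod_{p} c_p(E)$, and $\#E_{\mathrm{tors}}$.

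First I would invoke the modularity theorem of Wiles, Taylor--Wiles, and Breuil--Conrad--Diamond--Taylor to realize $L(s,E)$ as the $L$-function of a weight-two newform of level $N_E$, securing analytic continuation, the functional equation, and access to the period $\Omega_E$. For analytic rank $r(E) \le 1$, the Gross--Zagier formula relates $L'(E, \tfrac{1}{2})$ to the N\'eron--Tate height of a Heegner point on $E$, and Kolyvagin's Euler system of Heegner points then bounds the relevant Selmer group and forces $\Sh(E)$ to be finite, yielding the rank equality in this range. The refined leading-term identity would be attacked one prime at a time: for good ordinary $p$, the Iwasawa main conjecture of Skinner--Urban combined with Kato's divisibility pins down $\#\Sh(E)[p^{\infty}]$ and ties it to the relevant $p$-adic $L$-function; the supersingular and anticyclotomic ranges would be handled via the work of Kobayashi, Wan, and others, while the Archimedean and Tamagawa contributions are computed directly from the N\'eron model.

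The hard part will be the range $r(E) \ge 2$, where no systematic construction of nontorsion rational points on $E$ is known, the Heegner-point input of Gross--Zagier degenerates, and consequently neither $\mathrm{rank}\, E(\mathbb{Q}) = r(E)$ nor the finiteness of $\Sh(E)$ is established in a single nontrivial example. Overcoming this would presumably demand genuinely new Euler or Kolyvagin systems, perhaps arising from Bloch--Kato type classes in the cohomology of higher-dimensional Shimura varieties, together with deformation-theoretic input beyond current modularity lifting theorems. Even when $r(E) \le 1$, the prime $p = 2$, the precise contribution of $\prod_p c_p(E)$ at places of additive reduction, and the full $p$-part of $\#\Sh(E)$ remain only partially controlled, so a truly complete proof of the displayed equality for every elliptic curve over $\mathbb{Q}$ would have to absorb each of these open pieces simultaneously. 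Nothing in the decorrelation machinery of this paper interacts with these obstructions, so the conjecture is stated here purely as the arithmetic backdrop for the subsequent applications rather than as something the present methods can reach.
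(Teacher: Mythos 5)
This statement is the Birch and Swinnerton-Dyer conjecture, which the paper states as a \emph{conjecture} and uses only as a hypothesis in its arithmetic applications; the paper contains no proof of it, and none is expected. Your assessment is correct: the statement is a famous open problem, your summary of the known partial results (modularity, Gross--Zagier, Kolyvagin, Iwasawa-theoretic input for the leading-term formula, and the obstruction at analytic rank $\ge 2$) is accurate, and your observation that the paper's decorrelation machinery does not bear on proving BSD but merely assumes it matches the paper's own usage exactly.
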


For an elliptic curve over $\mathbb{Q}$, when the analytic rank, is at most 1, then the results of Gross--Zagier~\cite{GrossZagier1986} and Kolyvagin~\cite{Kolyvagin1990} imply that the analytic rank coincides with the algebraic rank.
Therefore, for \( L(\frac{1}{2},E) \neq 0\), we have $r=0$ and $\Reg(E)=1$.
According to Goldfeld's conjecture~\cite{Goldfeld1979}, $50\%$ of the quadratic twists $E^{(d)}$ of $E$ satisfy this condition.

Cassels constructed an alternating bilinear pairing
\[
\langle \cdot, \cdot \rangle : \Sh(E) \times \Sh(E) \to \mathbb{Q}/\mathbb{Z},
\]
on the Tate--Shafarevich group \(\Sh(E)\), which is non-degenerate assuming \(\Sh(E)\) is finite.
This non-degeneracy implies that \(\Sh(E)\) is a finite self-dual group under the pairing.
Consequently, the order of \(\Sh(E)\) must be a perfect square, since a non-degenerate alternating pairing can only exist on a finite abelian group whose order is a perfect square (or on a vector space of even dimension). Moreover, there exists an isotropic subgroup
\[
H(E) \subseteq \Sh(E),
\]
meaning that \(\langle x, y \rangle = 0\) for all \( x, y \in H(E) \), with $\#H(E)^2=\# \Sh(E)$, and such subgroups generate \(\Sh(E)\).

Under the assumptions of the BSD conjecture and the GRH, one can observe a decorrelation between the isotropic subgroups of the Tate--Shafarevich groups of different elliptic curves when considering their quadratic twists of the same rank zero.

\begin{theorem}[Decorrelation of analytic orders of isotropy subgroups of Tate--Shafarevich groups of elliptic curves]
Let $1 \le i \le m$, and suppose each conductor $N_i$ elliptic curve $E_i$ is associate to a weight two Hecke eigenform $f_i$ with root number $\epsilon_{f_i}$.
Fix $\sigma = \pm 1$.
Let $N_0 = [8, N_1, \dots, N_m]$.
Let $a \bmod{N_0}$ denote a residue class with $a \equiv 1 \pmod{4}$ and $(a, N_0) = 1$, and for primes $p$ with $p \equiv
\sigma a \pmod{N_0}$, the root numbers satisfy
\begin{equation*}
\epsilon_{f_i}(\sigma p)= \epsilon_{f_i} \chi_{\sigma p}(-N_i) = 1, \quad \text{for all } i = 1, \dots, m.
\end{equation*}

Assume the GRH holds for
\begin{equation*}
L(s, \mathrm{sym}^2 f_i), \quad
L(s, f_i \times \chi_{\sigma p}), \quad
L(s, \chi_{\sigma p})
\end{equation*}
for all $f_i$ and primes $p \equiv \sigma a \pmod{N_0}$ with $X \le p \le 2X$, and the BSD conjecture holds for $E_i^{(p)}$.
Then we have
\[
\frac{1}{X}
\sum_{\substack{X \le p\le 2X \\ \mathrm{prime }~
p \equiv \sigma a \pmod{N_0}}}\log p
\prod_{i=1}^{m}
\frac{\delta_{r(E_i^{(\sigma p)}=0)}\# H(E_i^{(\sigma p)})}{p^{1/4}}
\ll_{E_1,\dots,E_m} (\log X)^{-\frac{m}{8}}.
\]
\end{theorem}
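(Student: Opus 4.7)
The plan is to invoke the Birch--Swinnerton-Dyer formula to express each analytic order $\#H(E_i^{(d)})$ as the square-root of a suitably normalized central $L$-value, and then to apply Theorem~\ref{thm:Lbound} with exponents $\ell_1 = \cdots = \ell_m = \tfrac12$, so that the exponent appearing on the right-hand side is exactly $\sum_{i=1}^m \tfrac{\ell_i(\ell_i-1)}{2} = -\tfrac{m}{8}$.

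The congruence conditions imposed on $a \bmod N_0$ force the root number $\epsilon_{f_i}(d) = +1$ for every $i$, so each analytic rank $r(E_i^{(d)})$ is even and the indicator $\delta_{r(E_i^{(d)}) = 0}$ cuts out the rank-zero locus, where $L(\tfrac12, f_i \times \chi_d) > 0$. On this locus, the BSD formula together with Cassels' alternating pairing identity $\#\Sh(E_i^{(d)}) = \#H(E_i^{(d)})^2$ yields
\[
\#H(E_i^{(d)}) = \frac{L(\tfrac12, f_i \times \chi_d)^{1/2}\, \#E^{(d)}_{i,\mathrm{tors}}}{\bigl(\Omega_{E_i^{(d)}} \prod_p c_p(E_i^{(d)})\bigr)^{1/2}}.
\]
Substituting Pal's formula $\Omega_{E_i^{(d)}} = \tilde u_i \sqrt{|d|}\, \Omega_{E_i}$ (applicable because $d$ is squarefree and, via $d \equiv a \pmod{N_0}$, coprime to $2N_i$) supplies the factor $|d|^{1/4}$ that is cancelled by the normalization in the theorem's statement. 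Mazur's theorem bounds $\#E^{(d)}_{i,\mathrm{tors}} \ll 1$, Pal gives $\tilde u_i \ge \tfrac12$, $\Omega_{E_i}$ is a fixed positive constant, and each Tamagawa number satisfies $c_p(E_i^{(d)}) \ge 1$, so collectively
\[
\prod_{i=1}^m \frac{\delta_{r(E_i^{(d)})=0}\, \#H(E_i^{(d)})}{|d|^{1/4}}
\ll_{E_1,\ldots,E_m} \frac{1}{|d|^{m/2}} \prod_{i=1}^m L\!\left(\tfrac12, f_i \times \chi_d\right)^{1/2}.
\]

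The final step is to invoke Theorem~\ref{thm:Lbound} with $\ell_i = \tfrac12$, using that pairwise distinctness of the $f_i$ corresponds to the non-isogeny of the $E_i$, that GRC for weight-two holomorphic newforms is Deligne's theorem, and that the GRH hypotheses on $L(s, \mathrm{sym}^2 f_i)$ and $L(s, f_i \times \chi_d)$ are precisely those imposed here. The theorem then delivers
\[
\frac{1}{D} \sum_d \prod_{i=1}^m L\!\left(\tfrac12, f_i \times \chi_d\right)^{1/2}
\ll_{E_1,\ldots,E_m, \varepsilon} (\log D)^{-m/8 + \varepsilon},
\]
and combining with the trivial saving $|d|^{-m/2} \ll D^{-m/2}$ produces a bound even stronger than the stated one. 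The only non-routine aspect is verifying that the congruence conditions on $d$ simultaneously guarantee Pal's coprimality requirement, the applicability of BSD in the form used, and that the $+1$-root-number locus has positive density; everything else is bookkeeping atop Theorem~\ref{thm:Lbound}.
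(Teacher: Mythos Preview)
Your overall strategy is exactly the intended one: BSD in rank zero gives $\#\Sh(E_i^{(d)}) = L(\tfrac12,f_i\times\chi_d)\cdot(\#E^{(d)}_{i,\mathrm{tors}})^2/(\Omega_{E_i^{(d)}}\prod_p c_p)$, the Cassels pairing yields $\#H=\sqrt{\#\Sh}$, Mazur bounds the torsion, Tamagawa numbers are $\ge 1$, and Theorem~\ref{thm:Lbound} with $\ell_1=\cdots=\ell_m=\tfrac12$ supplies the exponent $\sum_i \ell_i(\ell_i-1)/2=-m/8$. The paper does not spell out a separate proof, and this is the derivation it has in mind.

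There is, however, one genuine slip in your bookkeeping. Pal's formula has the $\sqrt{|d|}$ on the other side: the correct relation is
\[
\Omega_{E_i^{(d)}} \;=\; \tilde u_i\,\frac{\Omega_{E_i}}{\sqrt{|d|}}
\]
(the real period of the twist shrinks like $|d|^{-1/2}$, since the N\'eron differential picks up a factor $|d|^{-1/2}$). The display in the paper appears to contain a typo, and you have propagated it. With the correct sign you obtain
\[
\frac{\#H(E_i^{(d)})}{|d|^{1/4}} \;\ll_{E_i}\; L\!\left(\tfrac12,f_i\times\chi_d\right)^{1/2},
\]
with no leftover power of $|d|$; the $|d|^{1/4}$ normalization is chosen precisely so that this cancellation is exact. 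Your claimed additional saving of $D^{-m/2}$ is therefore spurious, and in fact would contradict the paper's own remark that $\#H(E^{(d)})$ is typically of size $|d|^{1/4}$. Once this is corrected, Theorem~\ref{thm:Lbound} gives the stated bound on the nose, not a stronger one.

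One further small point: Theorem~\ref{thm:Lbound} requires the $f_i$ to be pairwise distinct, which here amounts to the $E_i$ being pairwise non-isogenous. You note this correctly; it is an implicit hypothesis that should be added to the theorem statement.
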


\begin{proof}
  Mazur~\cite{Mazur1977} showed that for $E$ over $\mathbb{Q}$, $1 \leq |E_{\mathrm{tors}}| \leq 12$.
From Pal \cite[Proposition 2.5]{Pal2012}, we know that for square-free integers \( d \) satisfying \(\gcd(|d|, 2N) = 1\), the real period \(\Omega(E^{(d)})\) satisfies
\[
\Omega(E^{(d)}) = \tilde{u} \sqrt{|d|} \, \Omega(E),
\]
where \(\tilde{u} \in \tfrac{1}{2}\mathbb{Z}\) is a constant depending only on \(E\) and \(d\).
Combining Theorem~\ref{thm:Lbound} with the explicit BSD formula \eqref{eqn:BSD}.
\end{proof}

\begin{remark}
Using sieve methods as in Hua and Huang~\cite{HH2023c}, for integers \( d \) with a bounded number of prime factors, the Tamagawa numbers of \( E^{(d)} \) are bounded. In this case, for most twists, the order of the Tate--Shafarevich group can grow as large as \(\sqrt{|d|}\). Therefore, it is reasonable to use \(|d|^{1/4}\) as a comparison scale for \(\# H(E^{(d)})\).
\end{remark}

\section*{Acknowledgements}

The author gratefully acknowledge the helpful discussion with Xinchen Miao.
%The author would like to thank Professors Bingrong Huang and Philippe Michel for their constant encouragement.
%This work was completed and partially supported during the Aarhus Automorphic Forms Conference in August 2025.

%%%%%%%%%%%%%%%%%%%%%%%%%%%%%%%%%%%%%%%%%%%%%%%%%%%%%%%%%%%%%%
%%%%%                      References                    %%%%%
%%%%%%%%%%%%%%%%%%%%%%%%%%%%%%%%%%%%%%%%%%%%%%%%%%%%%%%%%%%%%%
%\medskip

\end{document}